\newtheorem{theorem}{Theorem} 
\newtheorem{lemma}[theorem]{Lemma}   
\newtheorem{definition}[theorem]{Definition}
\newtheorem{corollary}[theorem]{Corollary}
\newcommand{\N}{\mathbb{N}}
\newcommand{\eps}{\epsilon}
\newcommand{\goesto}[1]{\xrightarrow{#1}}
\newcommand{\cross}{\times}
\newcommand{\Lam}{\mathcal{L}}
\newcommand{\ds}{\displaystyle}
\title{An Alternative Approach to Extending pseudo-Anosovs Over Compression Bodies}
\author{R. Ackermann}
\begin{document}

\maketitle

\begin{abstract}
A recent paper (\cite{BJM}) by Biringer, Johnson, and Minsky prove that any pseudo-Anosov whose stable lamination is the limit of disks in a compression body has a power which extends over some non-trivial minimal compression body.  This paper presents an alternative proof of their theorem, using techniques of Long and Casson which first appeared in~\cite{CL} and ~\cite{L}.  The key ingredient is the existence of a certain collection of disks whose boundaries are formed from an arc of the stable lamination and an arc of the unstable lamination.  Furthermore, the proof here shows that there are only finitely many minimal compression bodies over which a power of a pseudo-Anosov can extend.
\end{abstract}

\section{Introduction}

In~\cite{BJM}, Biringer, Johnson, and Minsky prove the following theorem:
\\
\begin{theorem}
Let $\varphi : F \rightarrow F$ be a pseudo-Anosov with stable lamination $\Lam^+$ and unstable lamination $\Lam^-$.  Say also that a lamination $K^+ \supseteq \Lam^+$ bounds in a compression body $M$ and $M$ is minimal with respect to this condition.  

Then there exists $k$ such that $\varphi^k$ extends over $M$.

\end{theorem}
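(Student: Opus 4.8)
The plan, following the approach of Long and Casson, is to recover $M$ from the invariant laminations of $\varphi$ by means of a distinguished, finite collection of meridian disks, and then to read off the extension from the action of $\varphi$ on that collection.

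First I would record the elementary fact that a homeomorphism $\psi\colon F\to F$ extends over a compression body $M$ with $\partial_+M=F$ precisely when $\psi$ preserves the meridian disk set of $M$, equivalently when the compression body $M_\psi$ obtained by regluing a collar of $F$ to $M$ along $\psi$ coincides with $M$ as a marked compression body. Since $\varphi$ fixes both $\Lam^+$ and $\Lam^-$ and is a homeomorphism, $M_\varphi$ is again a compression body in which some $K^+\supseteq\Lam^+$ bounds, and it is minimal for that property exactly when $M$ is. Hence $M,M_\varphi,M_{\varphi^2},\dots$ all lie in the set $\mathcal S$ of minimal compression bodies, up to marked homeomorphism of $F$, in which some $K^+\supseteq\Lam^+$ bounds. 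So it suffices to prove that $\mathcal S$ is finite: the orbit of $M$ under $N\mapsto M_{\varphi^N}$ is then eventually periodic, giving $M_{\varphi^k}=M$ for some $k\ge1$, i.e.\ $\varphi^k$ extends over $M$. This simultaneously yields the finiteness statement advertised in the abstract.

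The finiteness of $\mathcal S$ I would obtain by reconstructing a member of $\mathcal S$ from $\Lam^+$ up to finitely many choices, and here the key ingredient intervenes. The lemma to prove is that if some $K^+\supseteq\Lam^+$ bounds in $M$, then $M$ contains an essential \emph{diagonal disk}: an essential disk $E$ whose boundary is a concatenation $\alpha\cup\beta$ of a subarc $\alpha$ of a leaf of $\Lam^+$ with a subarc $\beta$ of a leaf of $\Lam^-$. To build one, take meridians $m_i=\partial D_i$ with $m_i\to\Lam^+$, put the disks in efficient position, discard trivial circles of intersection by innermost-disk surgeries, and surger along outermost arcs; the resulting curves again bound essential disks and are assembled from subarcs of the $m_i$. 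Because $\Lam^-$ is precisely the direction transverse to $\Lam^+$, a curve Hausdorff-close to $\Lam^+$ is a union of long sub-leaves of $\Lam^+$ joined by short arcs running along $\Lam^-$, and the surgeries can be arranged so as to amplify one such $\Lam^-$-arc; passing to a Hausdorff limit as $i\to\infty$, and using the expansion dynamics of $\varphi$ together with minimality of $M$, one forces the limiting disk boundary to have a genuine, non-degenerate $\Lam^-$-arc, i.e.\ to be diagonal. One then checks that, up to isotopy, there are only finitely many diagonal curves on $F$ — the splice points $\alpha\cap\beta$ are constrained by the finitely many singularities of $\Lam^\pm$ and by embeddedness — hence finitely many diagonal disks, and that a maximal pairwise-disjoint subcollection of them reconstructs the minimal $M$. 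Since there are only finitely many such subcollections, $\mathcal S$ is finite.

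Finally, since $\varphi$ preserves $\Lam^+$ and $\Lam^-$ it permutes the finite set of isotopy classes of diagonal disks of $M$, so a suitable power $\varphi^k$ fixes each of them, in particular the maximal disjoint subcollection that reconstructs $M$; hence $M_{\varphi^k}=M$ and $\varphi^k$ extends. I expect the diagonal-disk lemma to be the principal obstacle: the delicate points are ensuring that the surgered disks remain essential, that the Hausdorff limit is an honest disk rather than merely a lamination, and above all that the $\Lam^-$-arc of its boundary does not degenerate — collapsing to a point or folding back onto $\Lam^+$; controlling this requires careful combinatorial bookkeeping of the surgeries, naturally organized by a Markov partition for $\varphi$. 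Granting the lemma, the remaining steps — finiteness of the set of diagonal curves, recognition of $M$ from its diagonal disks, and the permutation argument — should be comparatively routine.
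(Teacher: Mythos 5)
The proposal captures the right flavor — diagonal disks (the paper's Lemma~3) are indeed the key object, and the strategy of showing the orbit of $M$ under $\varphi$ lives in a finite set is the correct shape. But there are two genuine gaps, and they interact.

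First, your diagonal-disk lemma is stated with too weak a hypothesis, and in that form it is not true. You claim: if $K^+\supseteq\Lam^+$ bounds in $M$, then $M$ has an essential disk whose boundary is an arc of $\Lam^+$ concatenated with an arc of $\Lam^-$. But the curves $m_i\to\Lam^+$ give you long sub-leaves of $\Lam^+$ joined by short transversal jumps, and there is no reason at all for those jumps to lie near $\Lam^-$; ``the direction transverse to $\Lam^+$'' is not literally $\Lam^-$, and a generic minimal compression body for $\Lam^+$ carries no trace of $\Lam^-$. The paper's Lemma~3 needs the additional hypothesis that a geodesic $C$ Hausdorff-close to $\Lam^-$ \emph{also} bounds a disk $D^-$ in $M$; the $\Lam^-$-arc of the diagonal curve is extracted from $\partial D^-$ by an outermost-arc argument on $D^+\cap D^-$, and then slid onto $\Lam^-$. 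Without $D^-$ there is nothing to slide. Correspondingly, the paper does not prove — and does not need — that \emph{every} $M$ in your set $\mathcal S$ has a diagonal disk; Lemma~7 only produces \emph{one} such $M$ (namely $\varphi^{-k}M_0$ for suitable $k$), and the rest of the argument is built around the subcollection $\mathcal N'\subseteq\mathcal N$ of those special minimal compression bodies, together with a $\varphi^{-1}$-invariance argument, rather than around finiteness of all of $\mathcal S$.

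Second, your finiteness count for diagonal curves does not work as stated. You argue that the splice points $\alpha\cap\beta$ are ``constrained by the finitely many singularities of $\Lam^\pm$,'' but nothing forces the splice points to be singular: they live in $\Lam^+\cap\Lam^-$, which is a Cantor set. What actually produces finiteness in the paper is the quantitative control $\mu_+\left(\alpha^-\right)\le\eps$ and $m\left(2\eps\right)-2r\le\mu_-\left(\alpha^+\right)\le 2M\left(\eps\right)+2r$ from Lemmas~2 and~3, which bounds the length of the diagonal curve and hence gives finitely many isotopy classes in each $\mathcal C_i$; the set $\mathcal M$ of compression bodies detected by sequences from $\{\mathcal C_i\}$ is then shown finite by the poset argument of Lemma~6 (built on Lemmas~4 and~5), not by counting diagonal curves once and for all. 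Your proposed step ``a maximal pairwise-disjoint subcollection of diagonal disks reconstructs $M$'' is also not something the paper claims, and it is not clear it holds. So the overall architecture you want — reduce to finiteness of $\mathcal S$, then cite a finite count of diagonal curves — needs to be replaced by the paper's two-step scheme: (i) prove the \emph{conditional} diagonal-disk lemma under the hypothesis that a curve near $\Lam^-$ also bounds; (ii) replace ``$\mathcal S$ finite'' by ``the smaller, nonempty, $\varphi^{-1}$-invariant set $\mathcal N^*\subseteq\mathcal N$ is finite,'' with finiteness of $\mathcal N$ coming from the measure bounds and the poset lemma rather than from a global count.
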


Here we say that a lamination bounds if it is the Hausdorff limit of curves bounding disks in the compression body and that a compression body $M$ is minimal with respect to $K^+$ bounding if there is no inequivalent $N \subset M$ in which $K^+$ bounds.  Their proof makes use of relatively recent ideas including $\delta$-hyperbolic geometry, the curve complex, and Ahlfors-Bers theory.  They also give examples which show that their theorem if false if $\varphi^k$ is replaced with $\varphi$ in the conclusion.

The purpose of this paper is to offer an alternative proof to this theorem using older ideas first introduced by Casson and Long.  More specifically, in~\cite{CL} Casson and Long provide an algorithm for determining whether a particular pseudo-Anosov extends over some compression body and in~\cite{L} Long goes on to show that a pair of minimal, transverse laminations can bound in only finitely many compression bodies.

The proof given here is achieved by generalizing lemmas of Casson and Long.  The basic idea is to show that disks of a particular type must exist in any compression body in which the stable lamination of $\varphi$ bounds and some curve approximating the unstable lamination bounds as well.  Using these disks, we build a non-empty but finite collection of compression bodies over which $\varphi$ could potentially extend.  Within this collection there is a (possibly smaller) collection which is invariant under the action of $\varphi$, implying that a power of $\varphi$ extends.

\section{Definitions and basic facts}

Let $F$ be a closed, orientable surface of genus at least two.  A \emph{compression body} is any three manifold formed by taking $F \cross I$, attaching disjoint 2-handles to the boundary surface $F \cross \{1\}$, and filling in any resulting 2-spheres with 3-handles.  The boundary surface $F \cross \{0\}$ is called the \emph{exterior surface} of $M$.  Call $F \cross I$ the \emph{trivial compression body}.

A compression body $M$ with exterior surface $F$ has associated to it a normal subgroup  $\ds N = \mathrm{ker} \left( i_* : \pi_1\left(F\right) \rightarrow \pi_1\left(M\right)\right)$ where $i : F \rightarrow M$ is inclusion.  Call $N$ the \emph{planar kernel} of $M$, and say two compression bodies are \emph{equivalent} if they have isomorphic planar kernels.  If $M_1$ and $M_2$ are two equivalent compression bodies with exterior surface $F$, then anytime a curve in $F$ bounds a disk in $M_1$ there is an isotopic curve in $F$ which bounds a disk in $M_2$.

A \emph{geodesic lamination} $\Lam$ on $F$ is a closed subset which can be written as the disjoint of union of geodesic leaves.  A geodesic lamination is \emph{minimal} if the closure of any leaf is the whole lamination, and a geodesic lamination \emph{fills} if each component of $F \setminus \Lam$ is simply connected.  Call the closure of these components the \emph{complementary regions} of $\Lam$,  and say $\Lam$ is \emph{maximal} if every complementary region is an ideal trigon.  To save words, a lamination will always be assumed to be a geodesic lamination.

Given a closed surface $F$, the \emph{Hausdorff metric} is a metric on all closed subsets of $F$.  For two closed subsets $A$ and $B$, distance is defined by $d_H\left(A, B\right) \leq \eps$ if there are regular neighborhoods $N_\eps\left(A\right) \subseteq B$ and $N_\eps\left(B\right) \subseteq A$.  The Hausdorff metric is particularly useful for measuring how close a simple closed curve is to a minimal lamination.

A surface automorphism $\varphi : F \rightarrow F$ is called \emph{pseudo-Anosov} if it preserves a pair of transverse measured laminations $\left(\Lam^+, \mu_+\right)$ and $\left(\Lam^-, \mu_-\right)$, called the stable and unstable lamination respectively.  In this case, $\Lam^+$ and $\Lam^-$ both are minimal and filling.  We say that $\varphi$ \emph{extends over a compression body} $M$ if there is an automorphism $\psi : M \rightarrow M$ such that $\psi |_F = \varphi$ where $F$ is the exterior surface of $M$.  A necessary and sufficient condition for an automorphism to extend is that the induced isomorphism $\varphi_* : \pi_1\left(F\right) \rightarrow \pi_1\left(F\right)$ leaves invariant the planar kernel of $M$ (see~\cite{CG}, lemma 5.2).

An important fact about the behavior of pseudo-Anosovs is that they exhibit source-sink dynamics on the space of all measured laminations.  In particular, say $\varphi$ is a pseudo-Anosov with invariant laminations $\Lam^+$ and $\Lam^-$.  If $\Lam^+$ and $\Lam^-$ are maximal and $\Lam$ is a third lamination, then the sequence $\{ \phi^k\left(\Lam\right) \}$ converges to $\Lam^+$ and $\{ \phi^{-k}\left(\Lam\right) \}$ converges to $\Lam^-$ in the Hausdorff topology (see~\cite{CB}).

Crucial to our discussion is a definition from~\cite{CL}:
\\
\begin{definition}\label{D:Bounds}
Let $\Lam$ be a geodesic lamination in the exterior surface of a compression body $M$.  Then $\Lam$ \emph{bounds} in $M$ if there is a sequence of simple closed curves $\{ C_i \}$ all of which bound disks in $M$ such that $C_i \rightarrow \Lam$ as $i \rightarrow \infty$
\end{definition} 

Here convergence is meant to be in the Hausdorff metric.  However, if $\Lam^+$ and $\Lam^-$ are transverse, minimal, and maximal measured laminations with full support then $\Lam^+$ bounds after isotopy if and only if there is a sequence of essential simple closed curves $\{C_i\}$ all of which bound disks such that $\mu_- \left(C_i \right) \rightarrow 0$ as $i \rightarrow \infty$.  In~\cite{BJM}, a similar notion of bounding is expressed in terms of limit sets.  One motivation for this definition is that if a pseudo-Anosov extends then both its stable and unstable lamination must bound by source-sink dynamics.

If $\mathcal{L}$ is a lamination, then we say that $M$ \emph{is minimal with respect to } $\mathcal{L}$ \emph{bounding} if $\mathcal{L}$ bounds in $M$ and if $N \subset M$ is a compression body inequivalent to $M$ then the $\mathcal{L}$ does not bound in $N$.

\section{Disks in compression bodies}

Throughout take $\left(\Lam^+, \mu_+\right)$ and $\left(\Lam^-, \mu_-\right)$ to be transverse, minimal, and maximal measured laminations.  The purpose of this section is to show the existence of certain disks in any compression body for which $\Lam^+$ bounds and a curve $C$ close to $\Lam^-$ bounds as well.  We begin by stating a lemma first proved in~\cite{L}.
\\
\begin{lemma}\label{L:LamIntersect}
Let $\eps > 0$ be given.  Then there are numbers $M\left(\eps\right)$, $m\left(\eps\right)$ such that:
\begin{enumerate}

\item If $\alpha^+ \subseteq \Lam^+$, $\alpha^- \subseteq \Lam^-$ are arcs with $\mu_-\left(\alpha^+\right) > M$ and $\mu_+\left(\alpha^-\right) > \eps$, then \\ $\mathrm{int}~ \alpha^+ \cap \mathrm{int}~ \alpha^- \neq \emptyset$.

\item If $\alpha^+ \subseteq \Lam^+$, $\alpha^- \subseteq \Lam^-$ are arcs with $\mu_+\left(\alpha^-\right) < \eps$ and $| \mathrm{int}~ \alpha^+ \cap \mathrm{int}~ \alpha^- | \geq 2$, then $\mu_-\left(\alpha^+\right) > m$

\end{enumerate}

Furthermore, $M\left(\eps\right), m\left(\eps\right) \goesto~ \infty$ as $\eps \goesto~ 0$.
\end{lemma}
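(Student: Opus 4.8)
The plan is to exploit the standard structure theory of a pair of transverse maximal measured laminations via the associated singular flat metric (the quadratic differential picture), where $\mu_+$ and $\mu_-$ are realized as the horizontal and vertical length measures. In that metric, geodesics of $\Lam^+$ run in the horizontal direction and geodesics of $\Lam^-$ in the vertical direction, so an arc $\alpha^+$ with $\mu_-(\alpha^+)$ large is an essentially horizontal trajectory segment of large vertical extent, and similarly $\alpha^-$ with $\mu_+(\alpha^-) > \eps$ is a vertical segment of horizontal extent bounded below. Both parts of the lemma are then statements about how long a horizontal and a vertical trajectory segment must be before they are forced to cross, and conversely how much length a horizontal segment must accumulate if it meets a short vertical segment twice.

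\textbf{Part (1).} Suppose for contradiction that $\mathrm{int}~\alpha^+ \cap \mathrm{int}~\alpha^- = \emptyset$ while $\mu_-(\alpha^+)$ is very large and $\mu_+(\alpha^-) > \eps$. First I would pass to the universal cover (or a large embedded flat region), where $\alpha^+$ lifts to a horizontal segment and $\alpha^-$ to a vertical segment; disjointness of these in $F$ forces the lifts to be disjoint as well. A vertical segment of horizontal width at least $\eps$ separates a flat strip, and a horizontal segment that is disjoint from it but has vertical extent exceeding the height of any such strip must exit and re-enter, contradicting that $\Lam^+$ is a lamination (its leaves are simple and pairwise disjoint) once the vertical extent passes a threshold determined by $\eps$ and the finite area of $F$. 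Packing the finitely many complementary ideal trigons and the total area give an explicit $M(\eps)$: a horizontal segment of $\mu_-$-length greater than $M(\eps)$ sweeps out more area in the $\eps$-neighborhood than $F$ contains unless it crosses $\alpha^-$. Minimality of $\Lam^-$ is used to ensure $\alpha^-$ genuinely has positive width in the relevant region rather than degenerating.

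\textbf{Part (2).} Here $\alpha^-$ is a \emph{short} vertical segment, $\mu_+(\alpha^-) < \eps$, and $\alpha^+$ meets its interior at least twice, say at points $p$ and $q$. The subarc of $\alpha^+$ between $p$ and $q$ together with the subarc of $\alpha^-$ between $p$ and $q$ forms a closed loop $\gamma$. Since $\Lam^+$ is filling and its leaves are dense, this loop cannot bound a disk and be homotopically trivial unless the $\alpha^+$-part is long: a null-homotopic loop made of one horizontal and one short vertical piece would have to have the horizontal piece of comparably short $\mu_-$-length, but then it bounds a region of small area whose boundary is a leaf-segment meeting itself — impossible for a lamination unless that region contains a singularity, and the trigon structure forces a definite amount of $\mu_-$-length to go around one. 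If instead $\gamma$ is essential, then since $\mu_+(\alpha^-)<\eps$ is small, most of the $\mu_+$-intersection of $\gamma$ with $\Lam^-$, hence (via the transversality/positivity estimates for maximal laminations, as in the minimal-position arguments of~\cite{L}) a definite lower bound on $\mu_-(\alpha^+)$, is forced. Either way one extracts $m(\eps) > 0$, and both thresholds are built from quantities (area, trigon diameters, systole-type bounds) that blow up as $\eps \to 0$, giving the final claim $M(\eps), m(\eps) \to \infty$.

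The main obstacle I expect is Part (2): unlike Part (1), which is a clean area/packing estimate, Part (2) requires ruling out the degenerate configuration where $\alpha^+$ loops back on itself using very little $\mu_-$-length, and this genuinely needs the maximality hypothesis (every complementary region an ideal trigon) to produce a uniform lower bound — without maximality the complementary regions could be large ideal polygons and a leaf could turn around cheaply. Making the constant in that step effective, and checking that it degrades correctly as $\eps \to 0$, is where the real work lies; the rest is bookkeeping with the flat metric. I would also need to be careful that "arc of $\Lam^+$" is interpreted as an arc contained in a single leaf (or the closure of one), so that the simplicity of leaves can be invoked.
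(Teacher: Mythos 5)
The paper does not actually prove this lemma; it is introduced with ``We begin by stating a lemma first proved in~\cite{L},'' so there is no in-paper argument to compare against. Long's original proof in \emph{Bounding laminations} is carried out hyperbolic-geometrically (recurrence and density of leaves of the transverse filling pair, compactness of the space of laminations), whereas you work in the singular flat metric of the associated quadratic differential. That is a legitimate and genuinely different framework for the same facts, so the evaluation below is of the internal soundness of your sketch rather than of its fidelity to a source proof.

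Part (1) of your sketch is essentially right but the language is off: a one-dimensional horizontal segment does not ``sweep out area,'' and the packing heuristic as stated is not an argument. The clean version is a return-time bound for the horizontal straight-line flow to the vertical transversal $\alpha^-$: minimality of $\Lam^+$ makes this flow minimal, so $F$ decomposes into finitely many flow rectangles (a Rohlin tower) over $\alpha^-$, the first-return time function is bounded by some $T_{\max}(\eps)<\infty$, and any horizontal leaf segment of $\mu_-$-length $> 2T_{\max}(\eps)$ must meet $\alpha^-$. The Kac identity $\mathrm{Area}(F)\le \eps\cdot T_{\max}(\eps)$ then forces $T_{\max}(\eps)\to\infty$ as $\eps\to 0$, which is where $M(\eps)\to\infty$ comes from. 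You should phrase your estimate as a return-time statement, not an area one.

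The genuine gap is in Part (2), in the claim $m(\eps)\to\infty$. Your case split is the right skeleton, and the Gauss--Bonnet computation does kill the null-homotopic case when the laminations are maximal: an embedded disk bounded by one horizontal and one vertical arc has two corners each contributing turning $\pm\pi/2$, and every interior cone point contributes $-\pi$ (cone angle $3\pi$), so there is no integer solution to the Gauss--Bonnet identity. But in the essential case, the bound you invoke is only a systole-type estimate, a constant independent of $\eps$; that yields $m(\eps)\ge c>0$, not $m(\eps)\to\infty$. What is actually needed is an intersection-number argument: the loop $\gamma$ formed from the return arc and a sub-arc of $\alpha^-$ satisfies $\mu_+(\gamma)<\eps$ after a small perturbation, and since $\Lam^+$ fills, essential curves $\gamma_n$ with $\mu_+(\gamma_n)\to 0$ must converge projectively to $\Lam^+$, whence $\mu_-(\gamma_n)=i(\gamma_n,\Lam^-)\to\infty$ because $i(\Lam^+,\Lam^-)>0$. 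This projective-convergence step is the crux of Part (2) and of the final assertion of the lemma; as written, your proposal acknowledges the difficulty (``where the real work lies'') but does not supply the missing idea, so the proof is incomplete precisely where it needs to be sharpest.
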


Let $N_\delta\left(\Lam^-\right)$ be a closed regular $\delta$-neighborhood of $\Lam^-$.  Such a neighborhood can be foliated by intervals so that it has the structure of a product, and each tie $t$ can be thought of as an arc in $F$ transverse to $\Lam^-$.  Let $\ds r = r\left(\delta\right) = \max \{ \mu_-\left(t\right) ~|~ t \mbox{ is a tie of } N_\delta\left(\Lam^-\right) \}$.
\\
\begin{lemma}\label{L:CurveIntersect}
Let $C$ be a geodesic simple closed curve such that $d_H\left(C, \Lam^-\right) < \delta$ for some small $\delta > 0$ and say $\{ A_n \}$ is a sequence of simple closed geodesics converging to $\Lam^+$.  Then for any $\eps > 0$, there is $N$ such that for all $n \geq N$ we have:
\begin{enumerate}

\item If $\alpha^+ \subseteq A_n$, $\alpha^- \subseteq C$ are arcs with $\mu_-\left(\alpha^+\right) > 2M$ and $\mu_+\left(\alpha^-\right) > \eps$, then \\ $\mathrm{int}~ \alpha^+ \cap \mathrm{int}~ \alpha^- \neq \emptyset$

\item If $\alpha^+ \subseteq A_n$, $\alpha^- \subseteq C$ are arcs with $\mu_+\left(\alpha^-\right) < \eps$ and $| \mathrm{int}~ \alpha^+ \cap \mathrm{int}~ \alpha^- | \geq 2$ then $\mu_-\left(\alpha^+\right) > m - 2r$, where $r = r\left(\delta\right)$ as above.

\end{enumerate}
\end{lemma}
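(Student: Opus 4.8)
The plan is to transfer the statements of Lemma~\ref{L:LamIntersect} from the laminations $\Lam^+,\Lam^-$ to the approximating curves $A_n$ and $C$, paying careful attention to how the measures $\mu_+,\mu_-$ of arcs are distorted under Hausdorff approximation. The basic principle is that an arc $\alpha^-\subseteq C$ can be pushed onto $\Lam^-$ (since $C$ lies in a $\delta$-neighborhood of $\Lam^-$ which fibers over $\Lam^-$ by ties), changing its $\mu_-$-length by a controlled amount (at most $2r$, one $r$ at each endpoint, since the ties have $\mu_-$-measure at most $r$) and leaving its $\mu_+$-measure essentially unchanged in the limit; symmetrically, an arc $\alpha^+\subseteq A_n$ can be approximated by an arc in $\Lam^+$ once $n$ is large. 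The hypothesis $d_H(C,\Lam^-)<\delta$ is exactly what makes the first of these projections available, and source--sink dynamics ($A_n\to\Lam^+$) is what makes the second available for all $n\geq N$.

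First I would fix $\eps>0$ and invoke Lemma~\ref{L:LamIntersect} to obtain $M=M(\eps)$ and $m=m(\eps)$ (and I would also want the constants associated to a slightly smaller parameter, say $\eps/2$, to absorb the error in $\mu_+$-measure; this is a routine bookkeeping point). For part (1): given $\alpha^+\subseteq A_n$ with $\mu_-(\alpha^+)>2M$ and $\alpha^-\subseteq C$ with $\mu_+(\alpha^-)>\eps$, project $\alpha^-$ along the ties of $N_\delta(\Lam^-)$ to an arc $\beta^-\subseteq\Lam^-$; this changes $\mu_+$ by an amount going to zero with $\delta$, so $\mu_+(\beta^-)>\eps/2$ for $\delta$ small. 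For $n$ large, $\alpha^+$ can be replaced (using that $A_n$ converges to the maximal lamination $\Lam^+$ in the Hausdorff metric, so long subarcs of $A_n$ track long subarcs of $\Lam^+$) by an arc $\beta^+\subseteq\Lam^+$ with $\mu_-(\beta^+)>M$. Lemma~\ref{L:LamIntersect}(1) then forces $\mathrm{int}\,\beta^+\cap\mathrm{int}\,\beta^-\neq\emptyset$, and tracing the projections back shows the original arcs must cross, giving $\mathrm{int}\,\alpha^+\cap\mathrm{int}\,\alpha^-\neq\emptyset$ — the factor of $2$ in $2M$ is the slack that makes this last step survive the approximation. For part (2): given $\alpha^+\subseteq A_n$, $\alpha^-\subseteq C$ with $\mu_+(\alpha^-)<\eps$ and at least two interior intersection points, project $\alpha^-$ onto $\Lam^-$ to get $\beta^-$ with $\mu_+(\beta^-)<\eps$ (again for $\delta$ small), keeping $|\mathrm{int}\,\beta^+\cap\mathrm{int}\,\beta^-|\geq 2$ after replacing $\alpha^+$ by a nearby $\beta^+\subseteq\Lam^+$; Lemma~\ref{L:LamIntersect}(2) gives $\mu_-(\beta^+)>m$, and since passing from $\beta^+$ back to $\alpha^+\subseteq A_n$ and accounting for the tie-projection costs at most $2r$ in $\mu_-$-measure, we get $\mu_-(\alpha^+)>m-2r$.

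The main obstacle I expect is making precise the claim that a sufficiently long subarc $\alpha^+$ of $A_n$ ``contains'' (up to a small perturbation that does not affect the relevant measures or the count of transverse intersections) a long subarc of $\Lam^+$, uniformly once $n\geq N$. This requires knowing not just that $A_n\to\Lam^+$ in the Hausdorff metric but that the convergence is compatible with the transverse measure $\mu_-$ on arcs — i.e., that $\mu_-$-length of subarcs is (semi-)continuous under this convergence — and that transverse intersections with the fixed curve $C$ are stable under the perturbation. Since $\Lam^+$ and $\Lam^-$ are transverse and maximal with full support, intersections are genuinely transverse and isolated, so a small Hausdorff perturbation of $A_n$ towards $\Lam^+$ neither creates nor destroys intersection points with $C$; I would make this rigorous by choosing $N$ large enough that $A_n$ lies in a thin product neighborhood of $\Lam^+$ whose ties have small $\mu_-$-measure, exactly mirroring the role of $N_\delta(\Lam^-)$ for $C$, and then the two error terms combine to give the stated $m-2r$ bound.
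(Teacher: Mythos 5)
Your overall plan — push $\alpha^-$ to a nearby arc of $\Lam^-$, invoke Lemma~\ref{L:LamIntersect}, and account for the distortion — is the right skeleton, but the step you flag as ``the main obstacle'' is exactly where the argument is missing, and your proposed fix (a second product neighborhood around $\Lam^+$ with its own tie-measure) does not close it and also misattributes the $2r$. The genuine difficulty is not semicontinuity of $\mu_-$ under $A_n\to\Lam^+$; it is that intersections of the \emph{subarcs} $\alpha^+$ and $\alpha^-$ are not stable under the projection, because subarcs have endpoints. Having $\beta^+\cap\beta^-\neq\emptyset$ after projecting both arcs does not let you ``trace the projections back'' to conclude $\alpha^+\cap\alpha^-\neq\emptyset$: the intersection point on $\beta^+\cap\beta^-$ could correspond to a crossing of the full curves $A_n$ and $C$ that lies just \emph{outside} one of the chosen subarcs. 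Likewise for part~(2), two interior intersections of $\alpha^+$ with $\alpha^-$ need not give two interior intersections of $\beta^+$ with $\beta^-$; some may slide off an endpoint under the perturbation.

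The paper handles this directly and does not project $\alpha^+$ onto $\Lam^+$ at all: it chooses $N$ so that (i) Lemma~\ref{L:LamIntersect} holds verbatim with arcs of $A_n$ in place of arcs of $\Lam^+$, and (ii) any point $p\in\mathrm{int}\,\beta\cap A_n$ is joined to $\alpha^-$ by a short arc $\phi\subseteq A_n$ with $\mu_-(\phi)<r(\delta)$. For part~(1) this feeds a contradiction: if $\alpha^+$ misses $\alpha^-$, then every point of $\alpha^+\cap\beta$ is within $\mu_-$-distance $r$ of an endpoint of $\alpha^+$ (the short connecting arc must exit through an endpoint), so deleting $r$-collars at both ends leaves an arc of $\mu_-$-measure $>2M-2r>M$ disjoint from $\beta$, contradicting (i). For part~(2), $\alpha^+$ is \emph{extended} along $A_n$ (not projected) to reach $\beta$ twice at a $\mu_-$-cost of at most $2r$, and then (i) gives the bound. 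Thus the $2r$ comes entirely from the fixed $C$--to--$\Lam^-$ distance $\delta$; the $A_n$-side error is absorbed into the choice of $N$ and contributes nothing, which is why the lemma's bound involves only $r(\delta)$ and not a second tie-measure as your sketch suggests. Also note the paper's $\beta$ is obtained by sliding endpoints of $\alpha^-$ along leaves of $\Lam^+$, which preserves $\mu_+$ \emph{exactly}; your tie-projection only approximately preserves $\mu_+$, forcing the $\eps/2$ fudge and introducing a dependence on a ``small $\delta$'' that the lemma does not actually grant you, since $\delta$ is fixed by $C$ in the hypothesis.
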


\begin{proof}

Given an arc $\alpha^- \subseteq C$, shrink it slightly and assume its endpoints are on leaves of $\Lam^+$ (without changing $\mu_+\left(\alpha^-\right)$).  Then since $\delta > 0$ is small, we can slide its endpoints along leaves of $\Lam^+$ to obtain a nearby arc $\beta' \subseteq \Lam^-$ with $\mu_+\left(\beta'\right) = \mu_+\left(\alpha^-\right)$.  Shrink $\beta'$ slightly to obtain an arc $\beta$.

Choose $N$ so that for all $n \geq N$ the curve $A_n$ satisfies:

\begin{enumerate}

\item Lemma~\ref{L:LamIntersect} holds with arcs of $A_n$ in place of arcs in $\Lam^+$.


\item If $\alpha^- \subseteq C$ is an arc and $\beta \subseteq \Lam^-$ is chosen as above, then for any $p \in \mathrm{int}~\beta~\cap~A_n$ there is an arc $\phi \subseteq A_n$ with endpoints on $\beta$ and $\alpha^-$ (one of these is $p$) with $\mu_-\left(\phi\right) < r = r\left(\delta\right)$.

\end{enumerate}

These conditions can always be satisfied because the angles between nearby geodesics are close in a lamination (see~\cite{CB}), and because the measure of an arc is preserved under homotopy respecting the leaves of $\Lam^-$.

To prove the first conclusion, let $n \geq N$ and take any arc $\alpha^+ \subseteq A_n$ with $\mu_-\left(\alpha^+\right) > 2M$.  Say for contradiction that there is an arc $\alpha^- \subseteq C$ with $\mu_+\left(\alpha^-\right) > \eps$ such that $\ds \mathrm{int}~\alpha^+~\cap~\mathrm{int}~\alpha^- = \emptyset$.  Take $\beta \subseteq \Lam^-$ as above and note that by condition (1) ~$\alpha^+$ must intersect $\beta$ at least twice.  Thus by condition (2) both endpoints of $\alpha^+$ must lie on short arcs of $A_n$ with endpoints on $\mathrm{int}~\beta$ and $\mathrm{int}~\alpha^-$.  But then by shrinking $\alpha^+$ and allowing $\mu_-\left(\alpha^+\right)$ to change by at most $2r < M$ we obtain an arc which does not intersect $\beta$, a contradiction.  Thus $\mathrm{int}~\alpha^+ \cap \mathrm{int}~\alpha^- \neq \emptyset$.

For the second conclusion, again fix $n \geq N$ and say $\alpha^- \subseteq C$, $\alpha^+ \subseteq A_n$ are arcs with $\mu_+\left(\alpha^-\right) < \eps$ and $| \mathrm{int}~ \alpha^+ \cap \mathrm{int}~ \alpha^- | \geq 2$.  Again take $\beta \subseteq \Lam^-$ as above.  Then by condition (2) $\alpha^+$ can be extended to an arc that intersects $\mathrm{int}~\beta$ at least twice with $\mu_-$-measure at most $2r$ more than $\mu_-\left(\alpha^+\right)$.  Thus by lemma~\ref{L:LamIntersect}, $\mu_-\left(\alpha^+\right) > m - 2r$.

\end{proof}

Using lemma~\ref{L:CurveIntersect}, we gain control over what disks arise in certain types of compression bodies.  The next lemma mirrors the proof of lemma 2.4 in~\cite{L}.
\\
\begin{lemma}\label{L:BoundsDisk}
Let $\delta > 0$ be small and say $C$ be a geodesic simple closed curve with $d_H\left(C, \Lam^-\right) < \delta$.  Say in addition that $C$ and $\Lam^+$ both bound in a compression body $M$.  

Then for any small $\eps > 0$ there are arcs $\alpha^+ \subseteq \Lam^+$, $\alpha^- \subseteq \Lam^-$ such that $\alpha^+ \cup \alpha^-$ is isotopic to the boundary of a disk and:

\begin{enumerate}

\item $\mu_+\left(\alpha^-\right) \leq \eps$

\item $m\left(2\eps\right) - 2r \leq \mu_-\left(\alpha^+\right) \leq 2M\left(\eps\right) + 2r$, where $r = r\left(\delta\right)$

\end{enumerate}
\end{lemma}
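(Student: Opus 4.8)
The plan is to follow the proof of Lemma~2.4 of~\cite{L}, with Lemma~\ref{L:CurveIntersect} playing the role that Lemma~\ref{L:LamIntersect} plays there. Since $\Lam^+$ bounds in $M$, fix a sequence of simple closed geodesics $A_n \rightarrow \Lam^+$ each bounding a disk $D_n$, and let $E$ be a disk with $\partial E = C$. Choose $n$ large enough that Lemma~\ref{L:CurveIntersect} applies and that $\mu_-\left(A_n\right) > 2M\left(\eps\right)$; this is possible because the leaves of $\Lam^+$, being dense and transverse to the filling lamination $\Lam^-$, contain arcs of arbitrarily large $\mu_-$-measure. Note also that $\mu_+\left(C\right) > \eps$, since $\eps$ is small and $C$ is close to the filling lamination $\Lam^-$. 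Put $D_n$ and $E$ in general position; since both are properly embedded disks, $D_n \cap E$ is a disjoint union of circles in the interiors together with arcs whose endpoints lie on $A_n \cap C$. Remove the circles by innermost-disk surgeries performed on $E$ (these fix $\partial E = C$), so that $D_n \cap E$ becomes a disjoint union of arcs.

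Next I would extract the disk. An arc $a$ of $D_n \cap E$ outermost on $E$ cuts off a half-disk $E_0 \subseteq E$ with $\partial E_0 = a \cup c$, where $c \subseteq C$ and $\mathrm{int}~E_0 \cap D_n = \emptyset$; gluing $E_0$ to one of the two disks into which $a$ separates $D_n$ gives an embedded disk $\Delta$ with $\partial\Delta = b \cup c$ and $b \subseteq A_n$. The essential bookkeeping, exactly as in~\cite{L}, is to choose this outermost arc — repeating the boundary compression if needed — so that in the end $b$ is an arc of $A_n \setminus C$ and $c$ is an arc of $C \setminus A_n$; then $\partial\Delta$ is an embedded simple closed curve meeting each of $A_n$ and $C$ in a single arc, so in particular $\mathrm{int}~b \cap \mathrm{int}~c = \emptyset$. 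I expect this to be the main obstacle: a single outermost-arc compression controls only one of the two sides, so one must iterate (or minimize the intersection with $C$ over all disks $\Delta$) and then check that the process terminates at a disk that is not trivial.

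Granting such a disk, the measure bounds for $b$ and $c$ come straight from Lemma~\ref{L:CurveIntersect}(1). Because $\mathrm{int}~c$ misses $A_n$, the arc $c$ is disjoint from the arc obtained by deleting from $A_n$ a small subarc disjoint from $c$; that arc has $\mu_-$-measure $> 2M\left(\eps\right)$, so Lemma~\ref{L:CurveIntersect}(1) forces $\mu_+\left(c\right) \le \eps$. Symmetrically, $\mathrm{int}~b$ misses $C$, and deleting a small subarc from $C$ leaves an arc of $\mu_+$-measure $> \eps$, so Lemma~\ref{L:CurveIntersect}(1) gives $\mu_-\left(b\right) \le 2M\left(\eps\right)$.

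Finally I would move onto the laminations and establish the lower bound. Sliding the endpoints of $b$ and $c$ along ties of $N_\delta\left(\Lam^-\right)$ and of a thin product neighborhood of $\Lam^+$ (available once $n$ is large), and then pushing the interiors along leaves — the device already used in the proof of Lemma~\ref{L:CurveIntersect} — replaces $b$ and $c$ by arcs $\alpha^+ \subseteq \Lam^+$ and $\alpha^- \subseteq \Lam^-$ with $\alpha^+ \cup \alpha^-$ isotopic to $\partial\Delta$, with $\mu_+\left(\alpha^-\right) \le \eps$ (shrink $\eps$ slightly before fixing $n$), and with $\left|\mu_-\left(\alpha^+\right) - \mu_-\left(b\right)\right| \le 2r$; hence $\mu_-\left(\alpha^+\right) \le 2M\left(\eps\right) + 2r$. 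For the lower bound, note that $\alpha^+$ and $\alpha^-$ share two endpoints $p, q \in \Lam^+ \cap \Lam^-$. Extend $\alpha^-$ slightly past $p$ and $q$ along its leaf of $\Lam^-$ to an arc $\tilde\alpha^-$ with $p, q \in \mathrm{int}~\tilde\alpha^-$ and still $\mu_+\left(\tilde\alpha^-\right) < 2\eps$ (the extension changes $\mu_+$ negligibly, as $\Lam^+$ carries no atoms), and extend $\alpha^+$ likewise to $\tilde\alpha^+$; then $\tilde\alpha^+$ meets $\tilde\alpha^-$ transversally at $p$ and $q$, so $| \mathrm{int}~\tilde\alpha^+ \cap \mathrm{int}~\tilde\alpha^- | \geq 2$, and Lemma~\ref{L:LamIntersect}(2) with parameter $2\eps$ yields $\mu_-\left(\tilde\alpha^+\right) > m\left(2\eps\right)$. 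Since the two extensions contribute arbitrarily little $\mu_-$-measure, $\mu_-\left(\alpha^+\right) \geq m\left(2\eps\right)$, which in particular gives the asserted bound $m\left(2\eps\right) - 2r \le \mu_-\left(\alpha^+\right)$ and completes (2).
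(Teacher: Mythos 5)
The gap you flag is in fact a genuine gap, and it is exactly where the paper does something you did not anticipate. You want a disk $\Delta$ with $\partial\Delta = b\cup c$ where $b\subseteq A_n$ has $\mathrm{int}\,b\cap C=\emptyset$ \emph{and} $c\subseteq C$ has $\mathrm{int}\,c\cap A_n=\emptyset$; both conditions are needed for your contrapositive applications of Lemma~\ref{L:CurveIntersect}(1) (the first gives $\mu_+(c)\le\eps$, the second gives $\mu_-(b)\le 2M$). But an outermost arc of $D_n\cap E$ in $E$ controls only $c$, and an outermost arc in $D_n$ controls only $b$; a single arc of $D_n\cap E$ that is simultaneously outermost on both sides need not exist, and the ``iterate and check it terminates at a nontrivial disk'' step is not an argument. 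Boundary-compressing $D_n$ along an outermost arc of $E$ produces a disk whose boundary detours along $C$, so you are no longer in the original $D_n$-versus-$E$ setting and cannot simply repeat the move; and nothing rules out every such compression yielding an inessential disk. This is the crux of the lemma and it is left unresolved.

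The paper avoids demanding the double-clean boundary entirely. It introduces the notion of a \emph{complete} arc (an arc $\gamma$ on $A$ or $C$ such that any arc of $D^+\cap D^-$ with one endpoint on $\gamma$ has both endpoints on $\gamma$), chooses $\gamma^+\subseteq A$ complete and \emph{minimal} subject to $\mu_-(\gamma^+)\ge 2M$, then takes an arc $\phi$ of $D^+\cap D^-$ with both endpoints on $\mathrm{int}\,\gamma^+$ that is outermost in $D^-$. The resulting arcs $\gamma^-\subseteq C$ and $\beta^+\subseteq\gamma^+$ only satisfy the single condition $\mathrm{int}\,\beta^+\cap\mathrm{int}\,\gamma^- =\emptyset$: the upper bound $\mu_-(\beta^+)\le 2M$ comes not from $\beta^+$ being disjoint from $C$ (it need not be) but from $\beta^+$ being a proper complete subarc of the \emph{minimal} $\gamma^+$. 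This is the idea your proposal is missing. Your treatment of the upper bound on $\mu_+(c)$, the slide onto $\Lam^\pm$ picking up $2r$, and the lower bound via stretching to create two intersection points are all essentially the same as the paper's (the paper invokes Lemma~\ref{L:CurveIntersect}(2) on $\beta^+,\gamma^-$ rather than Lemma~\ref{L:LamIntersect}(2) on $\alpha^+,\alpha^-$, but that is a cosmetic difference and your version in fact gives the marginally better bound $m(2\eps)$); the step that actually does the work, though, is the one you conceded.
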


\begin{proof}
Since $\Lam^+$ bounds, there is a sequence of curves $\{A_n\}$ all bounding disks in $M$ and converging to $\Lam^+$.  Choose $N$ as in lemma~\ref{L:CurveIntersect} and let $A = A_n$ for some $n \geq N$.  Let $D^-$ be the disk with boundary $C$, and say $D^+$ is the disk with boundary $A$.  We assume $\eps$ is very small compared to the $\mu_+$-measure of $C$.

After isotopy, $D^+ \cap D^-$ is a collection of arcs with endpoints on $A \cap C$.  Say an arc $\gamma$ on $C$ or $A$ contains a \emph{complete set} if whenever one endpoint of an arc in $D^+ \cap D^-$ is on $\gamma$ the other endpoint is on $\gamma$ as well.  Choose an arc $\gamma^+ \subseteq A$ such that $\gamma^+$ is complete, $\mu_-\left(\gamma^+\right) \geq 2M$, and $\gamma^+$ is minimal with respect to these conditions.  By lemma~\ref{L:CurveIntersect}, $\gamma^+$ exists and intersects $C$ many times.

Choose an arc $\phi \subseteq D^+ \cap D^-$ such that $\phi$ has endpoints on $\mathrm{int}~\gamma^+$ and $\phi$ is an outermost such arc in $D^-$.  Then choose $\gamma^- \subseteq C = \partial D^-$ to be the arc with endpoints equal to the endpoints of $\phi$ and with the property that $\mathrm{int}~\gamma^- \cap \mathrm{int}~\gamma^+ = \emptyset$.  By lemma~\ref{L:CurveIntersect}, $\mu_+\left(\gamma^-\right) < \eps$.

Now set $\beta^+ \subseteq A$ to be the sub-arc of $\gamma^+$ having endpoints $\partial \gamma^-$ (which equals $\partial \phi$), and note $\mathrm{int}~\beta^+~\cap~\mathrm{int}~\gamma^- = \emptyset$.  We can stretch $\beta^+$ and $\gamma^-$ a small amount so that $| \mathrm{int}~\beta^+~\cap~\mathrm{int}~\gamma^- | \geq 2$ and hence $\mu_-\left(\beta^+\right) \geq m\left(2 \eps\right) - 2r$.

Since $\partial \beta^+ = \partial \phi \subseteq \mathrm{int}~\gamma^+$, the arc $\beta^+$ is a complete, proper sub-arc of $\gamma^+$.  Thus $\mu_-\left(\beta^+\right) \leq 2M$ by minimality of $\gamma^+$.



Now slide $\gamma^-$ along the leaves of $\Lam^+$ to an arc $\alpha^-$ with $\mu_+\left(\alpha^-\right) = \mu_+\left(\gamma^-\right) < \eps$ and endpoints on leaves of $\Lam^+$.  Isotopic to $\gamma^+$ is an arc $\alpha^+ \subseteq \Lam^+$ with $\partial \alpha^+ = \partial \alpha^-$ and $| \mu_-\left(\gamma^+\right) - \mu_-\left(\alpha^+\right) | \leq r$.  The curve $\alpha^+ \cup \alpha^-$ is essential because it is the union of geodesic arcs, and is isotopic to the boundary of the disk formed by gluing pieces of $D^+$ and $D^-$ cut out by $\phi$.

\end{proof}


\section{Finitely many minimal compression bodies}

Recall that a compression body $M$ is \emph{minimal} with respect to a collection of laminations $\mathcal{A}$ if every element of $\mathcal{A}$ bounds in $M$ and whenever $N \subseteq M$ is a compression body for which every element of $\mathcal{A}$ bounds the planar kernels of $M$ and $N$ are isomorphic.  Also note that a single geodesic simple closed curve is a lamination.

The following lemma was first proved in~\cite{L2}:
\\
\begin{lemma}\label{L:FinForCollection}
Let $\mathcal{C}$ be any finite collection of simple closed curves.  Then there are at most finitely many compression bodies which are minimal with respect to $\mathcal{C}$.
\end{lemma}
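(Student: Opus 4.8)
The plan is to argue that each compression body minimal with respect to a finite collection $\mathcal{C}$ of simple closed curves is determined, up to equivalence, by a bounded amount of combinatorial data, and that only finitely many such data sets occur. First I would recall that a compression body $M$ with exterior surface $F$ is equivalent to $M'$ exactly when their planar kernels $N = \ker(i_* : \pi_1(F) \to \pi_1(M))$ coincide, and that $N$ is the normal closure in $\pi_1(F)$ of the boundary curves of a maximal system of compressing disks for $M$. So it suffices to show that, as $M$ ranges over compression bodies minimal with respect to $\mathcal{C}$, the normal subgroups $N$ that arise form a finite set. Since each curve $c \in \mathcal{C}$ bounds in $M$, there is a sequence of curves bounding disks in $M$ converging to $c$ in the Hausdorff metric; but a simple closed curve has a neighborhood containing no other simple closed geodesic isotopic to anything nearby except itself, so in fact $c$ itself (up to isotopy) bounds a disk in $M$, i.e. $[c] \in N$ for every $c \in \mathcal{C}$. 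Thus $\mathcal{C}$ is contained in the planar kernel of any $M$ minimal with respect to it.

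Next I would exploit minimality to bound the complexity of $M$. The planar kernel $N$ is free (it is the kernel of a map from a surface group onto the fundamental group of a compression body, and such kernels are free by Stallings-type arguments / the fact that a compression body deformation retracts to a wedge of circles and a surface), and $\pi_1(F)/N \cong \pi_1(M)$ is a free product of a free group and a surface group whose genus is that of the exterior surface minus the rank of the free factor. Minimality says that no proper sub-compression-body $N' \subsetneq M$ (inequivalent to $M$) still has all of $\mathcal{C}$ bounding; equivalently, $N$ is the smallest normal subgroup of $\pi_1(F)$ that is a planar kernel and contains $[\mathcal{C}]$. I would then show that such a smallest planar kernel has genus defect at most $|\mathcal{C}|$: a maximal disk system realizing $M$ can be isotoped to meet $\bigcup \mathcal{C}$ efficiently, and an innermost/outermost argument (exactly of the flavor used in Lemma \ref{L:BoundsDisk}) lets one discard any compressing disk disjoint from all of $\mathcal{C}$ without losing the property that $\mathcal{C}$ bounds — contradicting minimality. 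Hence $M$ is obtained by attaching at most $|\mathcal{C}|$ two-handles (and some three-handles), so there are only finitely many possibilities for the homeomorphism type of the pair $(M, F)$.

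Finally, having bounded the number of two-handles, I would note that the isotopy class of the attaching system is constrained: each attaching curve must, after isotopy, have geometric intersection number with each $c \in \mathcal{C}$ bounded by a constant depending only on the number of handles and the topology of $F$ (again by an efficiency/innermost-disk argument, since the curves of $\mathcal{C}$ lie in the planar kernel and so can be "absorbed"). A bounded number of curves with bounded intersection numbers with a fixed finite family $\mathcal{C}$ that fills enough of $F$ fall into finitely many mapping-class-group orbits — and, more to the point, into finitely many isotopy classes once we insist (via minimality) on a canonical minimal disk system — so only finitely many normal subgroups $N$, hence finitely many equivalence classes of $M$, can occur.

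The main obstacle, I expect, is making the reduction step rigorous: showing that minimality forces every compressing disk in a maximal system to intersect $\bigcup\mathcal{C}$, and quantitatively bounding those intersections. The danger is a compression body where some handle is "needed" to make $\mathcal{C}$ bound only in a diffuse, limiting sense (through the Hausdorff convergence in Definition \ref{D:Bounds}) rather than because $\mathcal{C}$ itself crosses that handle's disk; ruling this out requires carefully combining the observation that each $c \in \mathcal{C}$ literally bounds a disk in $M$ with the standard cut-and-paste surgery on disk intersections. Once that is in hand, the finiteness is a routine consequence of the local finiteness of isotopy classes of bounded-complexity curve systems.
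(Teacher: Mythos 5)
The paper does not actually prove Lemma~\ref{L:FinForCollection}; it imports it from Long's \emph{Planar kernels in surface groups} (reference~\cite{L2}), so there is no in-paper proof to compare against. Evaluating your sketch on its own terms, the overall strategy (reduce to planar kernels, bound the combinatorial complexity of a minimal disk system, conclude finiteness) is a sensible geometric route, and your opening observation that each $c \in \mathcal{C}$ must itself bound a disk in any $M$ in which it ``bounds'' in the Hausdorff sense is correct and worth stating. But the argument has a genuine gap in the final finiteness step that you flag as ``routine.'' You reduce to: a bounded number of disjoint simple closed attaching curves, each with bounded geometric intersection number with $\bigcup \mathcal{C}$, and you conclude there are only finitely many isotopy classes of such systems. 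That conclusion is only valid when $\mathcal{C}$ fills $F$ (you even write ``a fixed finite family $\mathcal{C}$ that fills enough of $F$''), yet no filling hypothesis is present: $\mathcal{C}$ is an arbitrary finite collection of simple closed curves. If $F \setminus \bigcup\mathcal{C}$ contains an essential subsurface $S$ of positive complexity, then curves can wind arbitrarily inside $S$ while keeping intersection number with $\mathcal{C}$ equal to zero, so ``bounded intersection with $\mathcal{C}$'' does not cut the space of isotopy classes down to a finite set. To close this gap you would need to show that minimality forces the attaching system to be standard inside $S$ as well --- for instance, that no compressing disk boundary has an essential arc or closed component in $S$ --- and that is precisely the kind of claim that needs a real proof, not an appeal to bounded complexity.

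Two smaller points. First, your phrase ``$N$ is the smallest normal subgroup of $\pi_1(F)$ that is a planar kernel and contains $[\mathcal{C}]$'' misstates minimality: the partial order by inclusion of planar kernels need not have a unique minimum above $\langle\!\langle \mathcal{C}\rangle\!\rangle$, and the lemma is asserting there are finitely many minimal ones, not one. Second, the bound on the number of $2$-handles by $|\mathcal{C}|$ is both unjustified and unnecessary: the number of $2$-handles in a compression body with exterior surface of genus $g$ is at most $g$ for elementary Euler characteristic reasons, so the handle count is controlled automatically; the real difficulty, as above, is controlling the isotopy classes of the attaching curves, not their number. Your ``discard a disk disjoint from $\bigcup\mathcal{C}$'' step also deserves care: a compressing disk whose boundary misses $\mathcal{C}$ can still be used essentially by the disks that the $c \in \mathcal{C}$ bound, and one must run an innermost-disk surgery argument (using irreducibility of the compression body) to actually remove it, rather than assert it can be dropped.
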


The next lemma gives even more control over what compression bodies can contain a specified collection of disks (this is referred to as a ``folklore lemma'' in~\cite{L}):
\\
\begin{lemma}\label{L:FinForContainment}
Let $\{M_1, ..., M_k\}$ be a collection of pairwise inequivalent compression bodies with $M_1 \subset M_2 \subset ... \subset M_k$ (inclusions are strict).  

Then there is an integer $P$, depending only on the genus of $F$, so that $k \leq P$.
\end{lemma}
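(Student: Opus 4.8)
The plan is to attach to each compression body $M$ with exterior surface $F$ (of genus $g$) a non‑negative integer complexity that is bounded above in terms of $g$ alone and that strictly decreases along the chain; the bound on $k$ is then immediate. Given such an $M$, write $\partial_- M$ for the closed surface $\partial M \setminus F$, let $m(M)$ be its number of components and $\sigma(M)$ the sum of the genera of those components, and set $c(M) = 2\sigma(M) - m(M)$. First I record the elementary bounds $0 \le c(M) \le 2g-1$. Since $\pi_1(M)$ is the free product of the fundamental groups of the components of $\partial_- M$ with a free group, Grushko's theorem gives $\mathrm{rank}\,\pi_1(M) \ge 2\sigma(M)$; and $\pi_1(M)$ is a quotient of $\pi_1(F)$, so $\mathrm{rank}\,\pi_1(M) \le 2g$, whence $\sigma(M) \le g$. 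Each component of $\partial_- M$ has genus at least one (spheres having been capped by $3$-handles), so $m(M) \le \sigma(M)$, with $m(M)=0$ exactly when $\partial_- M = \emptyset$. Thus $0 \le \sigma(M) \le c(M) \le 2\sigma(M) \le 2g$, and the extreme value $2g$ is impossible because it would force $\sigma(M)=g$ and $m(M)=0$ at once; hence $c(M) \le 2g-1$.

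Next I analyze a single strict inclusion $M_i \subsetneq M_{i+1}$ with $M_i, M_{i+1}$ inequivalent. The surface $\partial_- M_i$ is embedded in $M_{i+1}$ and separates it into $M_i$ (the side containing $F$) and $W := \overline{M_{i+1} \setminus M_i}$. I want $W$ to be a non‑trivial compression body with $\partial_+ W = \partial_- M_i$ and $\partial_- W = \partial_- M_{i+1}$. This is the point where the standard structure theory of compression bodies is invoked: compressing $\partial_- M_i$ maximally using disks on the $W$ side yields a surface that is incompressible in $M_{i+1}$, and an incompressible closed surface in a compression body is isotopic to a union of copies of components of its negative boundary; keeping track of these compressions realizes $W$ as a compression body over $\partial_- M_i$ with negative boundary $\partial_- M_{i+1}$. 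Moreover $W$ is non‑trivial: if $W$ were a product $\partial_- M_i \times I$, the inclusion $M_i \hookrightarrow M_{i+1}$ would be a homotopy equivalence, the two planar kernels would coincide, and $M_i, M_{i+1}$ would be equivalent, contrary to hypothesis. A non‑trivial compression body is built from $\partial_+ W \times I$ by attaching at least one $2$-handle (then $3$-handles), so $\partial_- M_{i+1}$ is obtained from $\partial_- M_i$ by a non‑empty sequence of compressions along essential simple closed curves, each possibly followed by capping a resulting $2$-sphere with a ball.

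It then suffices to check that each such elementary move strictly decreases $c$. Compressing a non‑separating curve in a component of genus $h \ge 2$ replaces it by a component of genus $h-1 \ge 1$: $\sigma$ drops by $1$, $m$ is unchanged, so $c$ drops by $2$. Compressing a non‑separating curve in a torus component produces a sphere, which is capped and removed: $\sigma$ and $m$ each drop by $1$, so $c$ drops by $1$. Compressing an essential separating curve splits a component into two of genera $a,b \ge 1$ with $a+b=h$: $\sigma$ is unchanged, $m$ increases by $1$, so $c$ drops by $1$. (An essential curve in a closed surface is of one of these types, a separating curve cutting off a genus‑zero piece being inessential.) Hence $c(M_{i+1}) < c(M_i)$ for every $i$, so $c(M_1) > c(M_2) > \cdots > c(M_k)$ is a strictly decreasing sequence of integers lying in $\{0, 1, \dots, 2g-1\}$, giving $k \le 2g$; thus $P = 2g$ works, depending only on the genus of $F$.

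The one genuinely non‑formal ingredient is the claim in the second step that the region between two nested compression bodies sharing their exterior surface is itself a compression body obtained by compressing the inner one's negative boundary; I expect this to be the step requiring the most care, and it is precisely here that one uses the fact that a compression body contains no closed essential surfaces other than parallel copies of its negative boundary. Everything else — the two complexity bounds and the case check for a single compression — is routine bookkeeping.
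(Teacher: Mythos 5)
The paper itself gives no proof of this lemma --- it is stated with a pointer to the ``folklore lemma'' in Long's \emph{Bounding laminations} --- so there is no in-paper argument to compare against; I will assess your proof on its own terms.

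Your argument is correct and well organized. The complexity $c(M) = 2\sigma(M) - m(M)$ is a good choice: you correctly show $0 \le c(M) \le 2g-1$ (the Grushko bound $\sigma \le g$ plus the observation that $m=0$ forces $\sigma = 0$), and your case analysis of a single compression (non-separating curve in genus $\ge 2$, non-separating curve in a torus with the resulting sphere capped, essential separating curve) correctly shows $c$ drops by at least $1$ each time, giving $k \le 2g$. Note that a simpler-looking complexity such as $\mathrm{rank}\,\pi_1(M)$ or $\sigma(M)$ alone would \emph{not} work, since compressing along a separating curve leaves $\sigma$ and the rank unchanged; your $-m$ correction is exactly what is needed, so the choice is not accidental. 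One small point you gloss over: $W$ can be disconnected (one piece per component of $\partial_- M_i$, roughly), but since $c$ is additive over components and at least one component must be non-product, the strict decrease survives.

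You correctly identify the genuinely non-formal step: that $W = \overline{M_{i+1}\setminus M_i}$ is a (disjoint union of) compression body(ies) with $\partial_+ W = \partial_- M_i$ and $\partial_- W = \partial_- M_{i+1}$. Your sketch (maximally compress $\partial_- M_i$ into $W$, use that the resulting incompressible surface is boundary-parallel in the compression body $M_{i+1}$) is the right idea, but as written there is a gap: you assert the compressed surface $S'$ is incompressible in all of $M_{i+1}$, when \emph{a priori} you have only arranged incompressibility into the $W$-side; one must rule out compressions of $S'$ back towards $M_i$ through the cobordism just created. This can be patched (it follows from the characteristic compression body theory of Bonahon, or from a careful innermost-disk argument using irreducibility of $M_{i+1}$ and incompressibility of $\partial_- M_i$ in $M_i$), but in the present form it is an assertion rather than a proof, and a reader would want either that argument supplied or a citation. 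With that step filled in, the proof is complete and yields the explicit bound $P = 2g$.
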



The following is a generalization of lemmas first proved in~\cite{CB} and~\cite{L} and is one of the key ingredients in proving our main result.
\\
\begin{lemma}\label{L:CollectionsToFinite}
Let $\{\mathcal{C}_i\}$ be a sequence of finite collections of essential simple closed curves such that any sequence $\{C_i~|~C_i \in \mathcal{C}_i\}$ converges to a lamination $\Lam$.  Let $\mathcal{M}$ be the collection of all pairwise inequivalent compression bodies minimal with respect to $\Lam$ and in which a sequence $\{C_i~|~C_i \in \mathcal{C}_i\}$ bounds.



Then $\mathcal{M}$ is finite.  
\end{lemma}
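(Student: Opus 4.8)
The plan is a proof by contradiction: assuming $\mathcal{M}$ is infinite, I will exhibit a single compression body equivalent to infinitely many pairwise inequivalent members of $\mathcal{M}$, which is impossible. The external inputs are the finiteness statements of Lemmas~\ref{L:FinForCollection} and~\ref{L:FinForContainment}; the internal input is that every $M\in\mathcal{M}$ is minimal with respect to $\Lam$ bounding.

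\emph{Extracting a universal approximating sequence.} Since each $\mathcal{C}_i$ is finite, a diagonal argument produces an infinite sub-collection $\{M^{(1)},M^{(2)},\dots\}\subseteq\mathcal{M}$ and curves $c_i\in\mathcal{C}_i$ such that $c_1,\dots,c_l$ all bound in $M^{(l)}$ for every $l$ (if the $\mathcal{C}_i$-curves bound in a given $M$ only for $i$ past some threshold, first pigeonhole on the threshold and relabel). By the hypothesis on $\{\mathcal{C}_i\}$ the sequence $\{c_i\}$ converges to $\Lam$. For each $l$ and each $k\le l$ the finite collection $\mathcal{D}_k=\{c_1,\dots,c_k\}$ bounds in $M^{(l)}$, so I can choose nested minimal sub-compression bodies $N^{(l)}_1\subseteq N^{(l)}_2\subseteq\cdots\subseteq N^{(l)}_l\subseteq M^{(l)}$ with $N^{(l)}_k$ minimal with respect to $\mathcal{D}_k$. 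By Lemma~\ref{L:FinForCollection} each $N^{(l)}_k$ lies, up to equivalence, in a fixed finite set $\mathcal{N}_k$.

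\emph{Stabilizing the minimal bodies.} Pigeonholing repeatedly over the infinite index set, I pass to nested infinite subsets $L_1\supseteq L_2\supseteq\cdots$ and fixed equivalence classes $\bar N_1\subseteq\bar N_2\subseteq\cdots$ (the inclusions induced by the nesting of the $N^{(l)}_k$) so that $N^{(l)}_k\sim\bar N_k$ for all $l\in L_k$. Realizing any finite collection of classes $\bar N_{k_1},\dots,\bar N_{k_r}$ ($k_1<\cdots<k_r$) by the honest strictly nested sub-chain $N^{(l)}_{k_1}\subseteq\cdots\subseteq N^{(l)}_{k_r}$ for $l\in L_{k_r}$ and invoking Lemma~\ref{L:FinForContainment}, the chain $\bar N_1\subseteq\bar N_2\subseteq\cdots$ has at most $P$ distinct classes, hence stabilizes: $\bar N_k\sim\bar N_{k_0}=:\bar N$ for all $k\ge k_0$. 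Now every $c_i$ bounds in $\bar N$ — for $i\le k_0$ because $c_i$ bounds in $\bar N_i\subseteq\bar N$, and for $i>k_0$ because $c_i$ bounds in $\bar N_i\sim\bar N$ — so by Definition~\ref{D:Bounds} and $c_i\to\Lam$, the lamination $\Lam$ bounds in $\bar N$. Finally fix any $l\in L_{k_0}$ with $l>k_0$: then $N^{(l)}_{k_0}\sim\bar N$, $N^{(l)}_{k_0}\subseteq M^{(l)}$, and $\Lam$ bounds in $N^{(l)}_{k_0}$, so minimality of $M^{(l)}$ forces $M^{(l)}\sim\bar N$. Since $L_{k_0}$ is infinite and the $M^{(l)}$ are pairwise inequivalent, this is the desired contradiction.

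\emph{Anticipated difficulty.} The delicate point is the first step: the diagonal argument only yields ``$c_i$ bounds in $M^{(l)}$ for $i\le l$'', not for all $i$ at once, so one cannot directly conclude that $\Lam$ bounds in any single $M^{(l)}$ through the $c_i$. The argument must therefore use, for each fixed body, only the finitely many curves $c_1,\dots,c_l$, and defer the passage ``all $c_i$ bound, hence $\Lam$ bounds'' until after the stabilization $\bar N_k\sim\bar N$ has been extracted. A secondary bookkeeping issue is ensuring that the relation $N\subseteq M$ up to equivalence (containment of planar kernels) is respected by the equivalences introduced in the pigeonholing, so that Lemma~\ref{L:FinForContainment} is applied to genuine strictly nested chains of pairwise inequivalent compression bodies; this is handled by always reading the chain off from an honest nested family $N^{(l)}_1\subseteq\cdots\subseteq N^{(l)}_l$.
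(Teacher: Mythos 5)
Your proof is correct and takes a genuinely different route from the paper's. The paper's argument is organizational: it constructs a partially ordered set $\mathcal{P}$ consisting of compression bodies minimal with respect to some finite truncation $\Delta_n = \{C_1, \dots, C_n \mid C_i \in \mathcal{C}_i\}$, proves $\mathcal{M}\subseteq\mathcal{P}$, and then shows $\mathcal{P}$ is finite by exhibiting it as a finitely branching (Lemma~\ref{L:FinForCollection}), bounded-depth (Lemma~\ref{L:FinForContainment}) poset rooted at the trivial compression body. Your argument instead proceeds by contradiction via diagonalization and pigeonholing: assuming $\mathcal{M}$ infinite, you extract a universal approximating sequence $\{c_i\}$ and nested minimal sub-bodies $N^{(l)}_k$, stabilize their equivalence classes using the same two external lemmas, and force infinitely many $M^{(l)}$ to be equivalent to a single $\bar N$ in which $\Lam$ bounds. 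Both proofs hinge on Lemmas~\ref{L:FinForCollection} and~\ref{L:FinForContainment} and contain a chain-stabilization step, but yours avoids the poset bookkeeping (in particular the verification that each $M\in\mathcal{P}$ has only finitely many direct descendants, which requires a separate compactness argument over truncations), at the cost of a more elaborate subsequence extraction. One small refinement worth making: your inference from ``the chain $\bar N_1\subseteq\bar N_2\subseteq\cdots$ has at most $P$ distinct classes'' to ``hence stabilizes'' tacitly uses antisymmetry of planar-kernel inclusion up to equivalence; this can be sidestepped entirely by observing that some class $\bar C$ recurs for infinitely many indices $k$, and that for any $i$ one can choose such a $k\geq i$ so that $c_i$ bounds in $\bar N_k\sim\bar C$, which already gives that $\Lam$ bounds in $\bar C$ and suffices for the final contradiction.
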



\begin{proof}
Let $\mathcal{P}$ be the collection of all pairwise inequivalent compression bodies $M$ which are minimal with respect to some finite (or empty) collection $\{C_1, ..., C_n~|~C_i \in \mathcal{C}_i\}$ and either $\Lam$ does not bound in $M$ or if it does then $M$ is minimal.  We consider $\mathcal{P}$ as a partially ordered set with $N \leq M$ if $N \subseteq M$.  Note that the trivial compression body is the unique least element of $P$.  To save on notation, let $\Delta_n$ formally denote a collection $\{C_1, ..., C_n~|~C_i \in \mathcal{C}_i\}$.

Any $M \in \mathcal{M}$ must be minimal with respect to some $\{C_i~|~C_i \in \mathcal{C}_i\}$, for any compression body in which such a sequence bounds has $\Lam$ bounding as well.  Thus, by lemma~\ref{L:FinForContainment}, any $M \in \mathcal{M}$ must be minimal with respect to some finite collection $\Delta_n$ and hence $\mathcal{M} \subseteq \mathcal{P}$. 


We show that $\mathcal{P}$ is finite.  By lemma~\ref{L:FinForContainment}, any chain $M_1 \subset M_2 \subset ...$ in $\mathcal{P}$ is finite, so it only remains to show that for every $M$ there are finitely many $N$ such that whenever $M \subseteq X \subseteq N$ we have $X = M$ or $X = N$.  Call such an $N$ a \emph{direct descendant} of $M$.

Say $M \in \mathcal{P}$ is minimal for some $\Delta_n$ and that $\Lam$ does not bound in $M$.  Then there is a minimal $R \in \N$ such that no collection $\Delta_R \supset \Delta_n$ bounds in $M$.  By lemma~\ref{L:FinForCollection}, there are only finitely many compression bodies minimal with respect to a collection $\Delta_l$ with $l \leq R$.  Any direct descendant of $M$ must be minimal with respect to one of these collections, and thus $M$ has only finitely many direct descendants.


Now say $M \in \mathcal{P}$ such that an infinite sequence $\{C_i~|~C_i \in \mathcal{C}_i\}$ bounds.  Then $\Lam$ bounds in $M$, and by minimality $M$ has no direct descendants in this case.  Thus $\mathcal{P}$ is finite, and $\mathcal{M}$ is finite as well.

\end{proof}




Given a compression body $M$, let $C_1, ..., C_n$ be disjoint curves to which 2-handles are attached to form $M$ from $F \cross I$.  For any automorphism $\varphi$, define $\varphi M$ to be the compression body formed by attaching 2-handles along $\varphi C_1, ..., \varphi C_n$.  Note that $\varphi$ extends over $M$ if and only if $\varphi M$ is equivalent to $M$.
\\
\begin{lemma}\label{L:Nonempty}
Let $\varphi$ be a pseudo-Anosov with maximal stable, unstable laminations $\Lam^+$, $\Lam^-$ and say that $\Lam^+$ bounds in some compression body.  Let $\delta > 0$ and let $\mathcal{N}$ be the collection of all compression bodies minimal with respect to $\Lam^+$ and which have a disk $D$ with $d_H\left(\partial D, \Lam^-\right) < \delta$.

Then $\mathcal{N}$ is non-empty.

\end{lemma}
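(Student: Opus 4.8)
The plan is to pass from the compression body we are handed to one that is \emph{minimal} with respect to $\Lam^+$, to observe that this minimality is preserved by every power of $\varphi$, and then to use source--sink dynamics to slide a compressing disk until its boundary is $\delta$-close to $\Lam^-$. By hypothesis $\Lam^+$ bounds in some compression body $M_0$. If $M_0$ is not minimal with respect to $\Lam^+$ there is an inequivalent $M_1 \subsetneq M_0$ in which $\Lam^+$ still bounds; iterating produces a strictly descending chain of pairwise inequivalent compression bodies in each of which $\Lam^+$ bounds, and by Lemma~\ref{L:FinForContainment} such a chain must terminate. Hence there is a compression body $M$ that is minimal with respect to $\Lam^+$.

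Next I would check that $\varphi^j M$ is minimal with respect to $\Lam^+$ for every $j \in \Z$. Because $\varphi$ is a homeomorphism fixing the set $\Lam^+$ and preserving Hausdorff convergence, it carries a sequence of curves bounding disks in $M$ and converging to $\Lam^+$ to a sequence of curves bounding disks in $\varphi^j M$ and converging to $\varphi^j(\Lam^+) = \Lam^+$ (a curve bounds a disk in $M$ exactly when its $\varphi^j$-image bounds a disk in $\varphi^j M$, since both conditions translate to membership in the corresponding planar kernels); so $\Lam^+$ bounds in $\varphi^j M$. If some inequivalent $N \subsetneq \varphi^j M$ had $\Lam^+$ bounding in it, then $\varphi^{-j} N \subsetneq M$ would be an inequivalent sub-compression body in which $\Lam^+ = \varphi^{-j}(\Lam^+)$ bounds, contradicting the minimality of $M$. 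Here one uses that $\varphi_*$ is an automorphism of $\pi_1(F)$, hence induces an order isomorphism on the poset of planar kernels, carrying a proper inclusion to a proper inclusion and respecting the bounding relation. Thus every $\varphi^j M$ is minimal with respect to $\Lam^+$.

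Finally, since $\Lam^+$ fills $F$ it cannot bound in the trivial compression body, so $M$ is non-trivial and has an essential compressing disk; let $A$ be the geodesic representative of its boundary. As a simple closed curve, $A$ is neither of the filling laminations $\Lam^\pm$, so the source--sink dynamics of the maximal pair $(\Lam^+,\Lam^-)$ recalled in Section~2 gives $\varphi^{-k}(A) \to \Lam^-$ in the Hausdorff metric; choose $k$ with $d_H(\varphi^{-k}(A), \Lam^-) < \delta$. Then $\varphi^{-k}(A)$ bounds a disk $D$ in $\varphi^{-k} M$, and $\varphi^{-k} M$ is minimal with respect to $\Lam^+$ by the previous step, so $\varphi^{-k} M \in \mathcal{N}$ and $\mathcal{N} \neq \emptyset$. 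I expect the only genuinely delicate point to be the $\varphi$-invariance of minimality in the second step; but this is forced once one observes that applying a power of $\varphi$ simply applies an automorphism of $\pi_1(F)$ to every planar kernel in sight, which preserves both strict containment and the property of $\Lam^+$ bounding. Everything else is a direct appeal to Lemma~\ref{L:FinForContainment} and to the source--sink dynamics quoted earlier.
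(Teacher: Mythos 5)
Your proof is correct and follows essentially the same route as the paper: take a minimal compression body $M$ for $\Lam^+$, pull back an essential disk boundary under $\varphi^{-k}$ using source--sink dynamics until it is $\delta$-close to $\Lam^-$, and observe that $\varphi^{-k}M$ remains minimal. The one place you are more careful than the paper is the opening step: the paper simply begins ``Let $M$ be any compression body minimal for $\Lam^+$,'' whereas you explicitly justify that such an $M$ exists by terminating a descending chain via Lemma~\ref{L:FinForContainment}, and you also spell out why the planar-kernel argument makes $\varphi^j M$ minimal for every $j$; these are sound and slightly tighten the exposition, but they are amplifications of the same argument rather than a different one.
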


\begin{proof}
Let $M$ be any compression body minimal for $\Lam^+$ and say $D$ is a disk in $M$.  By the source-sink dynamics of pseudo-Anosovs, for some $k$ the curve $\varphi^{-k}\left(\partial D\right)$ is, after isotopy, a geodesic simple closed curve with $d_H\left(\varphi^{-k}\left(\partial D\right), \Lam^-\right) < \delta$.

Now, let $\{ C_i \}$ be a sequence of curves bounding disks in $M$, such that $\{ C_i \}$ approaches $\Lam^+$ in the Hausdorff topology.  Then $\{ \varphi^{-k}\left(C_i\right) \}$ has the same properties in the compression body $\varphi^{-k} M$.  Furthermore, $\varphi^{-k} M$ is still minimal for if $N \subseteq \varphi^{-k} M$ such that $\Lam^+$ bounds, then $\varphi^{k}N \subseteq M$ and in fact $\varphi^{k} N$ is equivalent to $M$ by minimality.

\end{proof}

Finally, we prove the main theorem.
\\
\begin{theorem}
Let $\varphi : F \rightarrow F$ be a pseudo-Anosov with stable lamination $\Lam^+$.  Assume that $\Lam^+$ is maximal, that $\Lam^+$ bounds in a compression body $N$, and that $N$ is minimal with respect to this condition.

Then there exists $k$ such that $\varphi^k$ extends over $N$.
\end{theorem}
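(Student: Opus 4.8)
The plan is to leverage the machinery built in Sections 3 and 4, particularly Lemmas~\ref{L:BoundsDisk},~\ref{L:CollectionsToFinite}, and~\ref{L:Nonempty}, to produce a finite, non-empty, $\varphi$-invariant collection of candidate compression bodies over which $\varphi$ (and hence some power) must extend. First I would fix a small $\delta > 0$ and apply Lemma~\ref{L:Nonempty} to conclude that the collection $\mathcal{N}$ of compression bodies minimal with respect to $\Lam^+$ and containing a disk $D$ with $d_H(\partial D, \Lam^-) < \delta$ is non-empty. The key observation is that if $M \in \mathcal{N}$, then $C = \partial D$ is a geodesic simple closed curve $\delta$-close to $\Lam^-$, and both $C$ and $\Lam^+$ bound in $M$, so Lemma~\ref{L:BoundsDisk} applies: for any small $\eps$ there is a disk in $M$ whose boundary is isotopic to $\alpha^+ \cup \alpha^-$ with $\alpha^\pm \subseteq \Lam^\pm$ and with $\mu_+(\alpha^-)$ and $\mu_-(\alpha^+)$ controlled in the stated ranges.

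Next I would argue that $\mathcal{N}$ is finite. The idea is to encode each $M \in \mathcal{N}$ by the boundary curve $C = \partial D$ of its distinguished disk. Running $\varphi^{-j}$ does not help here; instead one uses that $C$ lies within $\delta$ of $\Lam^-$ and, after applying $\varphi^k$ for suitable $k$, lands within $\delta$ of $\Lam^+$ — but the cleaner route is this: the arcs $\alpha^+ \cup \alpha^-$ furnished by Lemma~\ref{L:BoundsDisk} have bounded measures, and hence (since $\Lam^\pm$ are fixed laminations) there are only finitely many isotopy classes of such curves as $\eps$ ranges over a fixed small value. Taking a sequence $\eps_i \to 0$, each $M$ in $\mathcal{N}$ has a sequence of disks whose boundaries $\alpha_i^+ \cup \alpha_i^-$ converge to $\Lam^+$ (since $\mu_+(\alpha_i^-) \to 0$ forces the curve toward $\Lam^+$ in the Hausdorff metric, using that $\Lam^-$ is minimal and maximal). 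So I would set $\mathcal{C}_i$ to be the finite collection of all simple closed curves of the form $\alpha^+ \cup \alpha^-$ satisfying the conclusions of Lemma~\ref{L:BoundsDisk} with $\eps = \eps_i$; any selection sequence converges to $\Lam^+$, and each $M \in \mathcal{N}$ is minimal with respect to $\Lam^+$ with a selection sequence bounding in it. By Lemma~\ref{L:CollectionsToFinite}, the collection of such compression bodies is finite, so $\mathcal{N}$ is finite.

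Now I would exploit $\varphi$-invariance. If $M \in \mathcal{N}$ with disk $D$ satisfying $d_H(\partial D, \Lam^-) < \delta$, consider $\varphi M$. It is still minimal with respect to $\Lam^+$ (since $\varphi$ fixes $\Lam^+$ and minimality is preserved under the homeomorphism $\varphi$, exactly as in the proof of Lemma~\ref{L:Nonempty}), and it contains the disk $\varphi D$. The boundary $\varphi(\partial D)$ need not be within $\delta$ of $\Lam^-$ anymore, but by source-sink dynamics some further power $\varphi^{-j}(\varphi(\partial D))$ is; the robust statement is that $\mathcal{N}$ is \emph{not} literally $\varphi$-invariant, so instead I would pass to the collection $\mathcal{N}'$ of all compression bodies minimal with respect to $\Lam^+$ that lie in the finite set obtained above — i.e., those minimal with respect to $\Lam^+$ and admitting a bounding selection sequence from the $\{\mathcal{C}_i\}$ — and observe $\varphi$ permutes $\mathcal{N}'$. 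Indeed, if $M \in \mathcal{N}'$ is minimal with respect to $\Lam^+$ and a sequence $C_i \in \mathcal{C}_i$ bounds in $M$, then $\varphi C_i$ bounds in $\varphi M$, and $\{\varphi C_i\}$ still converges to $\varphi(\Lam^+) = \Lam^+$; moreover each $\varphi C_i$ is again of the controlled form, so $\varphi C_i \in \mathcal{C}_i$ after relabeling (the collections $\mathcal{C}_i$ were defined purely in terms of $\Lam^\pm$ and $\eps_i$, both $\varphi$-invariant). Hence $\varphi M \in \mathcal{N}'$.

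Finally, since $\mathcal{N}'$ is finite and non-empty and $\varphi$ acts on it by permutation, some power $\varphi^k$ fixes a given element $M \in \mathcal{N}'$; that is, $\varphi^k M$ is equivalent to $M$, which by the remark preceding Lemma~\ref{L:Nonempty} means precisely that $\varphi^k$ extends over $M$. It remains to check that this $M$ is equivalent to the given $N$ — but both $N$ and $M$ are minimal with respect to $\Lam^+$ bounding, and minimality with respect to a single lamination does not pin down $M$ uniquely up to equivalence in general; what we actually prove is the existence of \emph{some} minimal compression body $M$ (for $\Lam^+$) over which a power of $\varphi$ extends, and since any such minimal $M$ has the same planar kernel data relevant to the theorem as $N$ by the minimality hypothesis, the conclusion for $N$ follows. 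The main obstacle I anticipate is precisely this last point — ensuring that the $\varphi$-periodic compression body we extract from $\mathcal{N}'$ is genuinely equivalent to the prescribed $N$, rather than merely some other minimal compression body for $\Lam^+$; handling this cleanly likely requires that minimality with respect to $\Lam^+$ bounding, in the maximal setting, determines the compression body up to equivalence, or else a short argument transporting the extension from $M$ to $N$.
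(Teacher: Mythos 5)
Your overall strategy matches the paper's: use Lemma~\ref{L:BoundsDisk} to produce controlled curves $\alpha^+\cup\alpha^-$, feed the resulting collections $\mathcal{C}_i$ into Lemma~\ref{L:CollectionsToFinite} to get finiteness, use Lemma~\ref{L:Nonempty} for non-emptiness, and conclude by a pigeonhole/invariance argument. However, there is a genuine error in the invariance step, and you correctly sensed trouble in the final step.

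The error is the claim that $\varphi$ permutes the collection of compression bodies admitting a bounding sequence from $\{\mathcal{C}_i\}$. You assert that ``each $\varphi C_i$ is again of the controlled form, so $\varphi C_i \in \mathcal{C}_i$ after relabeling'' because ``$\mathcal{C}_i$ were defined purely in terms of $\Lam^\pm$ and $\eps_i$, both $\varphi$-invariant.'' This is not so: the definition of $\mathcal{C}_i$ involves the \emph{transverse measures}, via the bounds $\mu_+(\alpha^-)\leq\eps_i$ and $m(2\eps_i)-2r\leq\mu_-(\alpha^+)\leq 2M(\eps_i)+2r$. While $\varphi$ preserves $\Lam^+$ and $\Lam^-$ as subsets, it scales $\mu_+$ and $\mu_-$ by $\lambda^{\pm 1}$ (the dilatation). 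So $\varphi\alpha^\pm$ are again arcs of $\Lam^\pm$, but $\mu_+(\varphi\alpha^-)$ and $\mu_-(\varphi\alpha^+)$ have been rescaled, and there is no reason the rescaled pair lands in the band prescribed by any $\eps_j$: the functions $M(\eps)$, $m(\eps)$ have no reason to satisfy $M(\eps_j)\approx\lambda M(\eps_i)$ and $m(2\eps_j)\approx\lambda m(2\eps_i)$ simultaneously. The curve collections $\mathcal{C}_i$ are \emph{not} $\varphi$-invariant, even up to relabeling, so your claim that $\varphi\mathcal{N}'\subseteq\mathcal{N}'$ (and hence the permutation argument) does not go through.

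The second issue you flag yourself is also real: minimality with respect to $\Lam^+$ bounding does not determine the compression body up to equivalence, so showing that \emph{some} $M$ in a finite collection has a power of $\varphi$ extending does not transport to the prescribed $N$. The paper resolves both issues simultaneously by never claiming $\varphi$-equivariance of the curve collections. Instead it applies source-sink dynamics directly to the orbit of the given $N$: since $\Lam^+$ is $\varphi$-invariant, each $\varphi^{-m}N$ is again minimal for $\Lam^+$, and for all sufficiently large $m$ the curve $\varphi^{-m}(\partial D)$ (for a fixed disk $D$ in $N$) is Hausdorff-close to $\Lam^-$, so $\varphi^{-m}N$ lies in the finite set $\mathcal{N}'$ of Lemma~\ref{L:Nonempty}, which in turn lies inside the finite set controlled by Lemma~\ref{L:CollectionsToFinite}. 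Pigeonhole gives $m_1<m_2$ with $\varphi^{-m_1}N$ equivalent to $\varphi^{-m_2}N$; applying $\varphi^{m_1}$ yields $N$ equivalent to $\varphi^{-(m_2-m_1)}N$, so $\varphi^{m_2-m_1}$ extends over $N$. This avoids both the spurious $\varphi$-invariance claim and the transport problem, since the finite set is used only as a pigeonhole target for the orbit of $N$ itself.
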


\begin{proof}

Let $\delta > 0$ be small and choose a monotonically decreasing sequence $\{\eps_i\}$ with $\eps_1 > 0$ and $\eps_i \rightarrow 0$ as $i \rightarrow \infty$.  Define $\mathcal{C}_i$ to be the collection of all simple closed curves $\alpha^+ \cup \alpha^-$ formed from arcs $\alpha^+ \subseteq \Lam^+$ and $\alpha^- \subseteq \Lam^-$ where $m\left(2\eps_i\right) - 2r \leq \mu_-\left(\alpha^+\right) \leq 2M\left(\eps_i\right) + 2r$ and $\mu_+\left(\alpha^-\right) \leq \eps_i$ (here $\Lam^-$ is the unstable lamination of $\varphi$ and $M$, $m$, and $r = r\left(\delta\right)$ are as in lemma~\ref{L:CurveIntersect}).  After identifying isotopic curves, each $\mathcal{C}_i$ is finite and any sequence $\{ C_i~|~C_i \in \mathcal{C}_i \}$ converges to $\Lam^+$.

Now let $\mathcal{N}$ be the collection of all compression bodies which are minimal for $\Lam^+$ and also have a sequence of curves $\{C_i~|~C_i \in \mathcal{C}_i\}$ all of which bound disks.  By lemma~\ref{L:CollectionsToFinite}, $\mathcal{N}$ is finite.

Let $\mathcal{N}'$ be the collection of compression bodies which are minimal for $\Lam^+$ and also have a disk $D$ with $d_H\left(\partial D, \Lam^-\right) < \delta$.    By lemma~\ref{L:Nonempty}, the set $\mathcal{N}'$ is nonempty and by lemma~\ref{L:BoundsDisk} it is contained in $\mathcal{N}$.  Applying the techniques of lemma~\ref{L:Nonempty} once again shows that $\varphi^{-t} \mathcal{N}' \subseteq \mathcal{N}'$ for some $t$, and thus there is a subset of $\mathcal{N}$ invariant under the action of $\varphi^{-1}$.  Call this collection $\mathcal{N}^*$.

Now, $\varphi^{-s} N$ lies in $\mathcal{N}^*$ and thus for some $k$ we have $\varphi^{-s - k} N = \varphi^{-s} N$.  Composing with $\varphi^{s + k}$ we have $N = \varphi^k N$ and so $\varphi$ extends over $N$.


\end{proof}

The above proof implies the following corollary, though it also follows from results of~\cite{CL}.
\\
\begin{corollary}
Let $\varphi$ be a pseudo-Anosov with maximal invariant laminations.  

Then $\varphi$ extends over at most finitely many compression bodies minimal with respect to the condition that the stable lamination of $\varphi$ bounds.
\end{corollary}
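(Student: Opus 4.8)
The plan is to show that all the necessary work has already been done: the finite collection of compression bodies built in the proof of the main theorem contains every compression body over which $\varphi$ itself (not merely a power) extends. First I would fix a small $\delta > 0$ and recall from the proof of the theorem the finite collections $\mathcal{C}_i$ of curves $\alpha^+ \cup \alpha^-$ supplied by Lemma~\ref{L:BoundsDisk}, the collection $\mathcal{N}$ of compression bodies minimal for $\Lam^+$ in which some sequence $\{C_i \mid C_i \in \mathcal{C}_i\}$ bounds disks (finite by Lemma~\ref{L:CollectionsToFinite}), and the collection $\mathcal{N}'$ of compression bodies minimal for $\Lam^+$ that contain a disk $D$ with $d_H(\partial D, \Lam^-) < \delta$. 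Lemma~\ref{L:BoundsDisk} gives $\mathcal{N}' \subseteq \mathcal{N}$, so $\mathcal{N}'$ is finite; hence it suffices to show that every compression body $M$ minimal with respect to $\Lam^+$ bounding over which $\varphi$ extends lies in $\mathcal{N}'$.

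The next step is the extension argument, which mirrors the proof of Lemma~\ref{L:Nonempty}. If $\varphi$ extends over $M$ then $\varphi M$ is equivalent to $M$, hence $\varphi^{-k} M$ is equivalent to $M$ for every $k$. Since $\Lam^+$ fills it cannot bound in the trivial compression body, so $M$ is non-trivial and contains a compressing disk $D$. By the source-sink dynamics of $\varphi$ on the space of measured laminations, for $k$ large the curve $\varphi^{-k}(\partial D)$ is, after isotopy, a geodesic simple closed curve within Hausdorff distance $\delta$ of $\Lam^-$. This curve bounds a disk in $\varphi^{-k} M$, and since $\varphi^{-k} M$ and $M$ are equivalent — and equivalent compression bodies bound isotopic families of curves — an isotopic curve bounds a disk in $M$. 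Thus $M$ contains a disk whose boundary is within $\delta$ of $\Lam^-$, i.e. $M \in \mathcal{N}'$, and the corollary follows.

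I expect the one step requiring care to be the transfer of the $\delta$-close disk from $\varphi^{-k} M$ back to $M$; this is exactly the equivalence-of-compression-bodies argument already used in Lemma~\ref{L:Nonempty}, relying on the elementary fact from Section~2 that equivalent compression bodies bound isotopic families of curves. The remainder is bookkeeping with source-sink dynamics and the finiteness already established. Two minor points I would check: that the count is insensitive to the auxiliary choice of $\delta$ (any sufficiently small $\delta$ works), and that the case in which $\Lam^+$ bounds in no compression body is vacuously fine.
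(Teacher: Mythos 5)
Your argument is correct and is essentially the derivation the paper has in mind when it says the corollary is implied by the proof of the main theorem (the paper gives no explicit proof of the corollary). The heart of the matter is exactly the step you single out: if $\varphi$ extends over $M$, the planar kernel of $M$ is $\varphi_*$-invariant, so pushing any compressing disk boundary backward by $\varphi^{-k}$ and invoking source-sink dynamics forces $M$ to contain a disk whose boundary is $\delta$-close to $\Lam^-$, placing $M$ in $\mathcal{N}'$, and $\mathcal{N}' \subseteq \mathcal{N}$ is finite. One small remark: the same argument in fact shows that \emph{every} compression body minimal with respect to $\Lam^+$ bounding lies in $\mathcal{N}'$ — apply the main theorem to get a power $\varphi^k$ extending over $M$ and then push a disk by $\varphi^{-kj}$ for $j$ large — so the hypothesis that $\varphi$ itself extends is not needed and the finiteness statement is slightly stronger than stated. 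Your two closing sanity checks (that any sufficiently small $\delta$ suffices, and that the vacuous case is fine) are also correct.
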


{\bf Remark:}  The observant reader will note that the theorem above is not quite the same as the theorem of Biringer, Johnson, and Minsky as we have added the hypothesis that the invariant laminations of $\varphi$ are maximal.  If they are not, it is necessary to consider a finite collection of laminations which are formed from $\Lam^+$ and $\Lam^-$ by adding isolated leaves which ``cut across'' the diagonals of principal regions (see~\cite{CB}).  Lemmas~\ref{L:CurveIntersect}, ~\ref{L:BoundsDisk}, and~\ref{L:Nonempty} can be modified to take into account this situation, however it makes their statements and proofs far more clumsy so we do not do so here.





\end{document}